\numberwithin{equation}{section}
\newtheorem{theorem}{Theorem}[section]
\newtheorem{lemma}{Lemma}[section]
\newtheorem{definition}{Definition}[section]
\newcommand{\sign}[1]{\mathrm{sgn}(#1)}
\begin{document}
\begin{frontmatter}


\title{Globally conservative weak solutions for a class of two-component nonlinear
dispersive wave equations beyond wave breaking
}

\author[ad1]{Yonghui Zhou}
\ead{zhouyhmath@163.com}
\author[ad3]{Xiaowan Li\corref{cor}}
\ead{xiaowan0207@163.com}
\address[ad1]{School of Mathematics, Hexi University, Zhangye 734000, P.R. China}
\address[ad3]{College of Mathematics and System Sciences, Xinjiang University, Urumqi, 830046, P.R. China}
\cortext[cor]{Corresponding author.}
\begin{abstract}

In this paper, we prove that the existence of globally conservative weak solutions for a class of two-component nonlinear dispersive wave equations beyond wave breaking. We first introduce a new set of independent and dependent variables in connection with smooth solutions, and transform the system into an equivalent semi-linear system.
We then establish the global existence of solutions for the semi-linear system via
the standard theory of ordinary differential equations. Finally, by the inverse transformation method, we prove the existence of the globally conservative weak solution for the original system.

\end{abstract}

\begin{keyword}
Two-component nonlinear dispersive wave equations; globally conservative solutions; wave breaking.
\end{keyword}
\end{frontmatter}


\section{Introduction}
\label{sec:1}

In this paper, we consider the Cauchy problem for a class of two-component nonlinear
dispersive wave equations with the following form
\begin{equation}
\begin{cases}
u_{t}-u_{txx}+(f(u))_{x}-(f(u))_{xxx}+\left(g(u)+\frac{f''(u)}{2}u_{x}^{2}\right)_{x}+\rho \rho_{x}=0,\\
\rho_{t}+f'(u)\rho_{x}+\left(\frac{1}{2}+\frac{f''(u)}{2}\right)\rho u_{x}=0,
\label{101}
\end{cases}
\end{equation}
subject to the initial data
\begin{equation}
u(0,x)=u_{0}(x),\ \rho(0,x)=\rho_{0}(x),
\label{102}
\end{equation}
which was studied by Novruzov and Bayrak \cite{Novruzov2022}, where $u(t,x)$ denotes the horizontal velocity of the fluid and $\rho(t,x)$ is a parameter related to the free surface elevation from equilibrium, $f(u), g(u)\in C^{\infty}(\mathbb{R},\mathbb{R})$ satisfy $g(0)=0$.

For $\rho\equiv 0$, system \eqref{101} is cast to the following nonlinear
dispersive wave equations
\begin{equation}
u_{t}-u_{txx}+(f(u))_{x}-(f(u))_{xxx}+\left(g(u)+\frac{f''(u)}{2}u_{x}^{2}\right)_{x}=0,
\label{103}
\end{equation}
which was proposed by Holden and Raynaud \cite{Holden2007}.
 Especially, if $f(u)=-7u^{2}-u$ and $g(u)=2u+10u^{2}-2u^{3}+3u^{4}$, Eq. \eqref{103} is cast to the following Constantin-Lannes equation
\begin{equation}
u_{t}-u_{txx}+u_{x}+6uu_{x}-6u^{2}u_{x}+12u^{3}u_{x}+u_{xxx}=-28u_{x}u_{xx}-14uu_{xxx},
\label{103a}
\end{equation}
which is proposed by Constantin and Lannes \cite{Constantin2009} as an asymptotic model to the Green-Naghdi equations.
If $f(u)=\frac{k}{2}u^{2}$, Eq. \eqref{103} is cast to the following generalized hyper-elastic rod wave equation
\begin{equation}
u_{t}-u_{txx}+\left(g(u)\right)_{x}=k\left(2u_{x}u_{xx}+uu_{xxx}\right),
\label{104}
\end{equation}
which was firstly studied by Coclite, Holden and Karlsen \cite{Coclite20051,Coclite20052}. If $g(u)=\frac{3}{2}u^{2}$, Eq. \eqref{104} reads as the so-called hyper-elastic rod wave equation
\begin{equation}
u_{t}-u_{txx}+3uu_{x}=k\left(2u_{x}u_{xx}+uu_{xxx}\right),
\label{105}
\end{equation}
which was proposed by Dai \cite{Dai1,Dai2}, and describes far-field, finite length, finite amplitude radial deformation waves in cylindrical compressible hyper-elastic rods and $u(t,x)$ represents the radial stretch relative to a pre-stressed state.
If $k=1$, Eq. \eqref{105} is reduced to the classical Camassa-Holm(CH) equation
\begin{equation}
u_{t}-u_{txx}+3uu_{x}=2u_{x}u_{xx}+uu_{xxx},
\label{106}
\end{equation}
which is a nonlinear dispersive wave equation that models the propagation of unidirectional irrotational shallow water waves over a flat bed. Such model was theoretically implied in the discussion of symmetries, symplectic and bi-Hamiltonian structure \cite{Fokas1981}, but was physically derived from Euler's water wave equations with addressing the significant peakon solution by Camassa and Holm \cite{Camassa1993} in 1993. 
A very interesting physical feature of CH equation is that it models wave breaking phenomena 
\cite{Constantin19971,Constantin1,Constantin19981,Constantin1998,Constantin19983,Constantin2000}. The CH equation also admits peaked traveling waves that interact like solitons \cite{Beals2000,Camassa1993,Lenells2005} and, moreover, these peaked waves are orbitally stable \cite{Constantin20003,Lenells2004}.

For $f(u)=\frac{u^{2}}{2}$ and $g(u)=ku+u^{2}$, system \eqref{101} becomes the two-component model for CH equation
\begin{equation}
\begin{cases}
u_{t}-u_{txx}+ku_{x}+3uu_{x}+2u_{x}u_{xx}+uu_{xxx}+\rho \rho_{x}=0,\\
\rho_{t}+u\rho_{x}+\rho u_{x}=0,
\label{107}
\end{cases}
\end{equation}
which was derived first in \cite{Shabat2002}, where $k$ is a dispersive coefficient related to the critical shallow water speed. In the paper \cite{Hone2017} a classification of integrable two-component systems of non-evolutionary partial differential equations that are analogous to the CH equation is carried out via the perturbative symmetry approach. Notice that the CH equation can be obtained via the obvious reduction $k=0$ and $\rho=0$.

In the last 30 years, the CH equation and its various generalizations
were tremendously investigated due to their many very interesting properties and remarkable solutions,
see \cite{Camassa1993,Constantin2001,Constantin19971,Constantin1,Constantin19981,
Li2000,Constantin1998,Constantin19982,Constantin19983,Constantin2000,
Constantin19984,Constantin20004,Wahlen2006,Dai1,Dai2,HolmStaley2003,Qiao2003CMP,Hone2017,
Qiao2006JMP,QiaoZhang2006,Coclite20051,Bressan2007,Bressan20071,Constantin2009,
Qiao2007JMP,Constantin20003,Chen2011,Brandolese2014,Brandolese20141,Ji2021,Novruzov2017,
Ji2022,Novruzov2022,Coclite20052,Zhou2022,Brandolese20142} and the reference therein. However, the related works are mainly to consider the behavior of strong solutions before or during the occurrence of wave breaking phenomena. In view of the possible development of singularities in finite time for the strong solutions, another interesting issue is concerned with the behavior of solutions beyond the occurrence of wave breaking phenomena. In 2007, by the characteristic method, Bressan and Constantin firstly proved that solutions of the CH equation can be continued as either globally conservative weak solution \cite{Bressan2007} or globally dissipative weak solutions \cite{Bressan20071}. Following the idea of \cite{Bressan2007}, Mustafa \cite{Mustafa20072} obtained the globally conservative weak solutions of Eq. \eqref{104};
Zhou, Ji and Qiao \cite{Zhou2024} investigated the existence of globally conservative weak solutions for Eq. \eqref{103}.

Inspired by the previous work, in this paper, we study the globally conservative weak solutions of the Cauchy problem \eqref{101}--\eqref{102}, but beyond wave breaking.

The rest of this paper is organized as follows. In Section \ref{sec:2}, we give the energy conservation law and some basic estimates. In Section \ref{sec:3}, we introduce a new set of independent and dependent variables in connection with smooth solutions, and transform system \eqref{101} into an equivalent semi-linear system. In Section \ref{sec:4}, we establish the global existence of solutions for the semi-linear system. In Section \ref{sec:5}, by the inverse transformation method, we prove the existence of the globally conservative weak solution for system \eqref{101}.

\textbf{Notation.} Throughout this paper, all spaces of functions are over $\mathbb{R}$, and for simplicity, we drop $\mathbb{R}$ in our notation of function spaces if there is no confusion. 
Additionally, we denote by $\|\cdot\|_{s}$ the norm in the Sobolev space $H^{s}(\mathbb{R})$, and denote by $\|\cdot\|_{L^{p}}$ the norm in the Lebesgue space $L^{p}(\mathbb{R})$. $f'(u)$ and $g'(u)$ represent the derivatives of $f(u)$ and $g(u)$ with respect to $u$, respectively.

\section{Preliminary}

\setcounter{equation}{0}

\label{sec:2}

In this section, we consider the following nonlocal form of a class of nonlinear
dispersive wave equations
\begin{equation}
u_{t}+f'(u)u_{x}+P_{x}=0,
\label{201}
\end{equation}
\begin{equation}
\rho_{t}+f'(u)\rho_{x}=-\left(\frac{1}{2}+\frac{f''(u)}{2}\right)\rho u_{x},
\label{201a}
\end{equation}
equivalent to system \eqref{101}, where the source term $P$ is defined by
\begin{equation}
P=p\ast\left(g(u)+\frac{f''(u)}{2}u_{x}^{2}+\frac{\rho^{2}}{2}\right)
=\frac{1}{2}e^{-|x|}\ast\left(g(u)+\frac{f''(u)}{2}u_{x}^{2}+\frac{\rho^{2}}{2}\right).
\label{202}
\end{equation}

For smooth solutions, differentiating \eqref{201} with respect to $x$ and using the relation $p_{xx}\ast h=p\ast h-h$, we get
\begin{equation}
u_{tx}+\frac{f''(u)}{2}u_{x}^{2}+f'(u)u_{xx}+P-g(u)-\frac{\rho^{2}}{2}=0.
\label{203}
\end{equation}
Multiplying \eqref{201} by $2u$, \eqref{201a} by $2\rho$ and \eqref{203} by $2u_{x}$, and adding the three resulting equations, we can get the following equation
\begin{equation}
(u^{2}+u_{x}^{2}+\rho^{2})_{t}+\left(f'(u)(u^{2}+u_{x}^{2}+\rho^{2})\right)_{x}
=-2(uP)_{x}+\left(2g(u)+f''(u)u^{2}\right)u_{x}.
\label{204}
\end{equation}
Define
\begin{equation}
H(u)=\int_{0}^{u}\left(2g(s)+f''(s)s^{2}\right)ds.
\label{205}
\end{equation}
Then \eqref{204} can be rewritten as
\begin{equation}
(u^{2}+u_{x}^{2}+\rho^{2})_{t}+\left(f'(u)(u^{2}+u_{x}^{2}+\rho^{2})\right)_{x}
=\left(H(u)-2uP\right)_{x}.
\label{206}
\end{equation}
Integrating \eqref{206} with respect to $t$ and $x$ over $[0,t]\times \mathbb{R}$, we get
\begin{equation}
E(t)=\int_{\mathbb{R}}(u^{2}(t,x)+u_{x}^{2}(t,x)+\rho^{2})dx=E(0).
\label{207}
\end{equation}

Since $f(u), g(u)\in C^{\infty}(\mathbb{R},\mathbb{R})$ and $g(0)=0$, then we have
\begin{equation}
|g(u(x))|\leq \sup_{|s|\leq\|u\|_{L^{\infty}}}|g'(s)||u(x)|\leq C(\|u\|_{1})|u(x)|.
\label{208}
\end{equation}
On the other hand, we can also easily verify that $|f''(u)|<C_{1}$ for some positive constant $C_{1}$. Therefore, we get
\begin{align}
\|P\|_{L^{2}}
\leq&\frac{1}{2}\left\|e^{-|x|}\right\|_{L^{1}}\|g(u)\|_{L^{2}}+ \frac{1}{2}\left\|e^{-|x|}\right\|_{L^{2}}\left\|\frac{f''(u)}{2}u_{x}^{2}+\frac{\rho^{2}}{2}\right\|_{L^{1}}\nonumber\\
\leq& C\left(\|g(u)\|_{L^{2}} +\|u\|_{1}^{2}+\|\rho\|_{L^{2}}\right)\nonumber\\
\leq& CE(0).
\label{209}
\end{align}
Similarly, we can obtain
\begin{equation}
\|P_{x}\|_{L^{2}}, \|P\|_{L^{\infty}},\|P_{x}\|_{L^{\infty}}\leq CE(0).
\label{2010}
\end{equation}

\section{Semi-linear system for smooth solutions}

\setcounter{equation}{0}

\label{sec:3}

In this section, we establish a semi-linear system for smooth solutions. To this end, it is essential to introduce the characteristic equation
\begin{equation}
\frac{dx(t)}{dt}=f'(u(t,x)).
\label{301}
\end{equation}
For any fixed point, the characteristic curve crossing the point $(t,x)$ is defined by setting
\begin{equation}
\gamma\rightarrow x^{c}(\gamma;t,x).
\label{302}
\end{equation}

In what follows, we use the energy density $(1+u_{x}^{2}(0,\bar{x}))$ to define the characteristic coordinate $Z=Z(t,x)$,
\begin{equation}
Z(t,x):=\int_{0}^{x^{c}(0;t,x)}(1+u_{x}^{2}(0,\bar{x}))d\bar{x}.
\label{303}
\end{equation}
Therefore, $Z(t,x)$ satisfies
\begin{equation}
Z_{t}+f'(u)Z_{x}=0, \ \ (t,x)\in \mathbb{R_{+}}\times\mathbb{R}.
\label{304}
\end{equation}
We also define $T=t$ to obtain the new coordinate $(T,Z)$. Then for any smooth function $h(T,Z)=h(t,Z(t,x))$, by \eqref{304}, we get
\begin{align}
h_{t}+f'(u)h_{x}
&=h_{T}T_{t}+h_{Z}Z_{t}+f'(u)\left(h_{T}T_{x}+h_{Z}Z_{x}\right)\nonumber\\
&=h_{T}\left(T_{t}+f'(u)T_{x}\right)+h_{Z}\left(Z_{t}+f'(u)Z_{x}\right)\nonumber\\
&=h_{T}
\label{305}
\end{align}
and
\begin{equation}
h_{x}=h_{T}T_{x}+h_{z}Z_{x}=h_{z}Z_{x}.
\label{306}
\end{equation}
Furthermore, we denote
\begin{eqnarray*}
u(T,Z):=u(T,x(T,Z)),\ \rho(T,Z):=\rho(T,x(T,Z)),\\
P(T,Z):=P(T,x(T,Z))\ \ \text{and}\ \ P_{x}(T,Z):=P_{x}(T,x(T,Z)).
\end{eqnarray*}

In what follows, we define
\begin{equation}
w:=2\arctan u_{x}\ \ \text{and}\ \ v:=(1+u_{x}^{2})\frac{\partial x}{\partial Z}
\label{307}
\end{equation}
with $u_{x}=u_{x}(T,x(T,Z))$. By \eqref{307}, we can easily verify that
\begin{equation}
\frac{1}{1+u_{x}^{2}}=\cos^{2}\frac{w}{2},\ \ \frac{u_{x}^{2}}{1+u_{x}^{2}}=\sin^{2}\frac{w}{2},\ \
\frac{u_{x}}{1+u_{x}^{2}}=\frac{1}{2}\sin w,
\label{308}
\end{equation}
\begin{equation}
\frac{\partial x}{\partial Z}=\frac{v}{1+u_{x}^{2}}=v\cos^{2} \frac{w}{2}.
\label{309}
\end{equation}
By \eqref{309}, for any time $t=T$, we have
\begin{equation}
x(T,Z')-x(T,Z)=\int_{Z}^{Z'}\left(v\cos^{2} \frac{w}{2}\right)(T,s)ds.
\label{3010}
\end{equation}

Let $y=x(T,Z')$ and $x=x(T,Z)$. By using the identities \eqref{308}--\eqref{3010}, we get
\begin{align}
P(Z)
=&P(T,Z)\nonumber\\
=&\frac{1}{2}\int_{\mathbb{R}}e^{-|x(T,Z)-y|}\left(g(u)
+\frac{f''(u)}{2}u_{x}^{2}+\frac{\rho^{2}}{2}
\right)(T,y)dy\nonumber\\
=&\frac{1}{2}\int_{\mathbb{R}}e^{-|\int_{Z}^{Z'}\left(v\cos^{2} \frac{w}{2}\right)(T,s)ds|}\Big(g(u(Z'))\cos^{2} \frac{w(Z')}{2}+\frac{f''(u(Z'))}{2}\sin^{2} \frac{w(Z')}{2}\nonumber\\
&+\frac{\rho^{2}}{2}\cos^{2} \frac{w(Z')}{2}\Big)v(Z')dZ'
\label{3011}
\end{align}
and
\begin{align}
P_{x}(Z)
=&P_{x}(T,Z)\nonumber\\
=&\frac{1}{2}\left(\int_{x(T,Z)}^{+\infty}
-\int_{-\infty}^{x(T,Z)}\right)e^{-|x(T,Z)-y|}\left(g(u)
+\frac{f''(u)}{2}u_{x}^{2}+\frac{\rho^{2}}{2}
\right)(T,y)dy\nonumber\\
=&\frac{1}{2}\left(\int_{Z}^{+\infty}-\int_{-\infty}^{Z}\right)
e^{-|\int_{Z}^{Z'}\left(v\cos^{2} \frac{w}{2}\right)(T,s)ds|}\bigg(g(u(Z'))\cos^{2} \frac{w(Z')}{2}\nonumber\\
&+\frac{f''(u(Z'))}{2}\sin^{2} \frac{w(Z')}{2}+\frac{\rho^{2}}{2}\cos^{2} \frac{w(Z')}{2}\bigg)v(Z')dZ'.
\label{3012}
\end{align}

In what follows, we derive a closed semi-linear system for the unknowns $u, \rho, w$ and $v$ under the new variables $(T,Z)$. From \eqref{201} and \eqref{301}, we get
\begin{equation}
u_{T}(T,Z)=u_{t}(T,Z)+f'(u)u_{x}(T,Z)=-P_{x}(T,Z),
\label{3013}
\end{equation}
where $P_{x}(T,Z)$ is given at \eqref{3012}.
From \eqref{203} and \eqref{307}, we get
\begin{align}
w_{T}(T,Z)=&\frac{2}{1+u_{x}^{2}}\left(u_{tx}+f'(u)u_{xx}\right)(T,Z)\nonumber\\
=&\frac{2}{1+u_{x}^{2}}\left(-\frac{f''(u)}{2}u_{x}^{2}+g(u)-P+\frac{\rho^{2}}{2}\right)\nonumber\\
=&2\left(g(u)-P+\frac{\rho^{2}}{2}\right)\cos^{2}\frac{w}{2}-f''(u)\sin^{2}\frac{w}{2}
\label{3014}
\end{align}
and
\begin{align}
\rho_{T}(T,Z)=\rho_{t}+f'(u)\rho_{x}
=&-\left(\frac{1}{2}+\frac{f''(u)}{2}\right)\rho u_{x}\nonumber\\
=&-\left(\frac{1}{2}+\frac{f''(u)}{2}\right)\rho \tan \frac{w}{2},
\label{3014a}
\end{align}
where $P=P(T,Z)$ is given at \eqref{3011}.

Below, we will derive the equation for $v(T,Z)$. To this end, we need to use the following relation
\begin{equation}
Z_{tx}+f'(u)Z_{xx}=-f''(u)u_{x}Z_{x},
\label{3015}
\end{equation}
which can be derived from \eqref{304}. Then \eqref{203}, \eqref{305}, \eqref{307} and \eqref{3015} yield
\begin{align}
v_{T}(T,Z)=\left(\frac{1+u_{x}^{2}}{Z_{x}}\right)_{T}
=&\frac{Z_{x}(1+u_{x}^{2})_{T}-(1+u_{x}^{2})Z_{xT}}{Z^{2}_{x}}\nonumber\\
=&\frac{2u_{x}Z_{x}(u_{tx}+f'(u)u_{xx})
-(1+u_{x}^{2})\left(Z_{xt}+f'(u)Z_{xx}\right)}{Z^{2}_{x}}\nonumber\\
=&\frac{2u_{x}(u_{tx}+f'(u)u_{xx})
+f''(u)(1+u_{x}^{2})u_{x}}{Z_{x}}\nonumber\\
=&\frac{u_{x}}{Z_{x}}\left(2g(u)-2P+f''(u)+\rho^{2}\right)\nonumber\\
=&\left(g(u)-P+\frac{f''(u)}{2}+\frac{\rho^{2}}{2}\right)v\sin w.
\label{3016}
\end{align}

\section{Global solutions of the semi-linear system}

\setcounter{equation}{0}

\label{sec:4}

In this section, we will prove the global existence of solutions for the semi-linear system. Let initial data $(u_{0}(x), \rho_{0}(x))=(\bar{u}, \bar{\rho})\in H^{1}\times \left(L^{2}\cap L^{\infty}\right)$ be given. We can transfer problem \eqref{201}-\eqref{201a} into the following semi-linear system
\begin{align}
\begin{cases}
u_{T}=-P_{x},\\
\rho_{T}=-\left(\frac{1}{2}+\frac{f''(u)}{2}\right)\rho \tan \frac{w}{2},\\
w_{T}=2\left(g(u)-P+\frac{\rho^{2}}{2}\right)\cos^{2}\frac{w}{2}-f''(u)\sin^{2}\frac{w}{2},\\
v_{T}=\left(g(u)-P+\frac{f''(u)}{2}+\frac{\rho^{2}}{2}\right)v\sin w
\end{cases}
\label{401}
\end{align}
subject to the initial data
\begin{align}
\begin{cases}
u(0,Z)=\bar{u}(\bar{x}(Z)),\\
\rho(0,Z)=\bar{\rho}(\bar{x}(Z)),\\
w(0,Z)=2\arctan \bar{u}_{x}(\bar{x}(Z)),\\
v(0,Z)=1,
\end{cases}
\label{402}
\end{align}
where $P$ and $P_{x}$ are given by \eqref{3011}--\eqref{3012}, respectively.

It is easy to verify that system \eqref{401} is invariant under translation by $2\pi$ in $w$. For simplicity, we choose $w\in[-\pi,\pi]$. We now consider system \eqref{401} as an  ordinary differential equations in the Banach space
\begin{equation}
X:=H^{1}(\mathbb{R})\times\left(L^{2}(\mathbb{R})\cap L^{\infty}(\mathbb{R})\right)\times\left(L^{2}(\mathbb{R})\cap L^{\infty}(\mathbb{R})\right)\times L^{\infty}(\mathbb{R})
\label{403}
\end{equation}
with the norm
\begin{equation*}
\|(u,\rho,w,v)\|_{X}=\|u\|_{1}+\|\rho\|_{L^{2}}+\|\rho\|_{L^{\infty}}+\|w\|_{L^{2}}+\|w\|_{L^{\infty}}+\|v\|_{L^{\infty}}.
\end{equation*}

In what follows, we first prove the local existence of solutions for the Cauchy problem \eqref{401}--\eqref{402}. By the standard theory of ordinary differential equations in the Banach space, we only need to show that all functions on the right-hand side of system \eqref{401} are locally Lipschitz continuous. We then use the energy conservation property to extend the local solution to the global solution.

\begin{lemma}\label{lem401}
Let $(\bar{u},\bar{\rho})\in H^{1}\times\left(L^{2}\cap L^{\infty}\right)$. Then the Cauchy problem \eqref{401}--\eqref{402} has a unique solution defined on $[0,T]$ for some $T>0$.
\end{lemma}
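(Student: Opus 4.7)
The plan is to regard \eqref{401}--\eqref{402} as an abstract Cauchy problem $\frac{dU}{dT}=F(U)$, $U(0)=U_0$, in the Banach space $X$, with $U=(u,\rho,w,v)$ and $U_0=(\bar u,\bar\rho,2\arctan \bar u_x,1)$, and then to invoke the classical Cauchy--Lipschitz (Picard--Lindel\"of) theorem for ODEs in Banach spaces. All the work is to prove that $F:X\to X$ is well-defined and locally Lipschitz continuous on an open ball in $X$ centered at $U_0$; once that is in hand, Picard iteration on a closed ball in $C([0,T];X)$ produces a unique solution $U\in C^1([0,T];X)$ for some $T>0$.

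First I would establish boundedness and Lipschitz dependence of the nonlocal quantities $P$ and $P_x$ defined in \eqref{3011}--\eqref{3012}. The main analytic ingredients are: (i) the smoothness of $f$ and $g$ together with $g(0)=0$ makes the Nemytskii maps $u\mapsto g(u)$ and $u\mapsto f''(u)$ locally Lipschitz on bounded subsets of $H^1\cap L^\infty$; (ii) the pointwise nonlinearities $\cos^2(w/2)$, $\sin^2(w/2)$ and $\sin w$ are globally Lipschitz in $w\in L^\infty$; (iii) the kernel $\exp(-|\int_Z^{Z'}(v\cos^2(w/2))(T,s)\,ds|)$ is dominated by $1$ and, via the elementary inequality $|e^{-a}-e^{-b}|\le|a-b|$, its variation is controlled by the $L^\infty$-variation of $v\cos^2(w/2)$. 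Combining (i)--(iii) yields estimates of the form $\|P\|_{L^\infty}+\|P\|_{L^2}+\|P_x\|_{L^\infty}+\|P_x\|_{L^2}\le \Phi(\|U\|_X)$ on bounded subsets of $X$, with analogous Lipschitz bounds. Control of $u$ in $H^1$ additionally requires that $\partial_Z P_x\in L^2$ depend continuously on $U$; this is obtained by differentiating \eqref{3012} in $Z$ and using the identity $\partial x/\partial Z=v\cos^2(w/2)$ from \eqref{309}.

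Second, I would verify Lipschitz continuity of the remaining pointwise nonlinearities appearing on the right-hand side of \eqref{401}. All such terms are smooth functions of $u$ multiplied by polynomials in $\cos(w/2)$, $\sin(w/2)$, $\sin w$, $v$ and $\rho$, except for the factor $\tan(w/2)$ in the equation for $\rho$. Since $w(0,Z)=2\arctan\bar u_x$ takes values in $(-\pi,\pi)$, one restricts attention to a neighborhood of $U_0$ in $X$ on which $\|w\|_{L^\infty}\le \pi-\delta$ and $v\ge v_*>0$ for some constants $\delta,v_*>0$; on such a neighborhood $\tan(w/2)$ is $C^\infty$ and Lipschitz in $w$, and $\rho\tan(w/2)$ is consequently locally Lipschitz as a map from $X$ into $L^2\cap L^\infty$.

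The main obstacle will be the Lipschitz estimate for the nonlocal terms $P$ and $P_x$: two kernels evaluated at $(v,w)$ and $(\tilde v,\tilde w)$ differ by a factor involving $\int_Z^{Z'}\Delta(v\cos^2(w/2))(T,s)\,ds$, a quantity that is only controllable in $L^\infty$ in $Z$; producing the $L^2$-bounds required by the definition of $X$ thus demands a careful splitting of the integration domain into a bounded part and a tail, together with exponential decay of the kernel coming from the uniform lower bound $v\cos^2(w/2)\ge c>0$ valid on the chosen ball. Once these Lipschitz bounds on $P$, $P_x$ and $\partial_Z P_x$ are secured, assembling them with the easy pointwise estimates of step two shows $F$ is locally Lipschitz on a neighborhood of $U_0$, and the abstract Cauchy--Lipschitz theorem yields the claimed unique local solution on $[0,T]$.
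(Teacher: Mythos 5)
Your overall framework---recasting \eqref{401}--\eqref{402} as an abstract ODE in the Banach space $X$ and proving local Lipschitz continuity of the right-hand side so that Picard--Lindel\"of applies---is the same as the paper's. The genuine gap lies in how you obtain the decay of the kernel $e^{-|\int_Z^{Z'}(v\cos^{2}\frac{w}{2})\,ds|}$ and how you tame the factor $\tan\frac{w}{2}$: both steps of your plan rest on restricting to a neighborhood of $U_{0}$ in $X$ on which $\|w\|_{L^{\infty}}\le \pi-\delta$, and hence on a pointwise lower bound $v\cos^{2}\frac{w}{2}\ge c>0$. But for general $\bar u\in H^{1}$ the derivative $\bar u_{x}$ is only in $L^{2}$ and may be essentially unbounded, so $w(0,Z)=2\arctan\bar u_{x}(\bar x(Z))$ can have essential supremum equal to $\pi$; in that case no neighborhood of $U_{0}$ in $X$ satisfies $\|w\|_{L^{\infty}}\le\pi-\delta$, and no pointwise lower bound on $\cos^{2}\frac{w}{2}$ is available. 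As written, your argument establishes the lemma only under the additional hypothesis $\bar u_{x}\in L^{\infty}$, which is strictly weaker than the statement.

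The paper avoids this by working on the domain \eqref{405}, where only $\|w\|_{L^{\infty}}\le\pi$ and an $L^{2}$ bound on $w$ are imposed, and by replacing the pointwise bound with a measure-theoretic one: by a Chebyshev-type estimate the set $\{Z:\,|w(Z)/2|\ge\pi/4\}$ has measure at most $\|w\|_{L^{2}}^{2}$ (see \eqref{409}), whence $\int_{Z_{1}}^{Z_{2}}v\cos^{2}\frac{w}{2}\,ds\ge\frac{v^{-}}{2}\left((Z_{2}-Z_{1})-\mu^{2}\right)$ as in \eqref{4010}. This produces the integrable exponential majorant $\Lambda$ of \eqref{4011}--\eqref{4012}, which is exactly what makes the $L^{2}$ and $H^{1}$ bounds for $P$, $P_{x}$ and their partial (Fr\'echet) derivatives go through without any $L^{\infty}$-smallness of $w$. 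Your instinct that the term $\rho\tan\frac{w}{2}$ requires control near $w=\pm\pi$ is reasonable (the paper simply asserts its Lipschitz continuity on $\Omega$), but any fix must not exclude the initial datum from the admissible set; to cover general $H^{1}$ data you need to incorporate the measure estimate (or an equivalent device) rather than an $L^{\infty}$ restriction on $w$.
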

\begin{proof}
To establish the local well-posedness, it suffices to prove the operator determined by the
right-hand side of \eqref{401}, which maps $(u,w,v)$ to
\begin{eqnarray}
\Big\{-P_{x},\ -\left(\frac{1}{2}+\frac{f''(u)}{2}\right)\rho \tan \frac{w}{2},\ 2\left(g(u)-P+\frac{\rho^{2}}{2}\right)\cos^{2}\frac{w}{2}-f''(u)\sin^{2}\frac{w}{2},\
\nonumber\\ \left(g(u)-P+\frac{f''(u)}{2}+\frac{\rho^{2}}{2}\right)v\sin w\Big\}
\label{404}
\end{eqnarray}
is Lipschitz continuous on every bounded domain $\Omega \subset X$ of the following form
\begin{eqnarray}
\Omega=\Big\{(u,\rho,w,v):\|u\|_{1}\leq \alpha, \|\rho\|_{L^{2}}\leq\beta, \|w\|_{L^{2}}\leq \theta, \|w\|_{L^{\infty}}\leq \pi, v(x)\in[v^{-},v^{+}]\nonumber\\
 \text{for a.e.}\ x\in \mathbb{R}\Big\}
\label{405}
\end{eqnarray}
for any positive constants $\alpha, \beta, \theta, v^{-}$ and $v^{+}$.

It is clear that the maps
\begin{align}
&-\left(\frac{1}{2}+\frac{f''(u)}{2}\right)\rho \tan \frac{w}{2},
\left(2g(u)+\rho^{2}\right))\cos^{2}\frac{w}{2},
-f''(u)\sin^{2}\frac{w}{2},\nonumber\\
&\left(g(u)+\frac{f''(u)}{2}+\frac{\rho^{2}}{2}\right)v\sin w,
\label{407}
\end{align}
are all Lipschitz continuous as maps from $\Omega$ into $L^{2}\cap L^{\infty}$.
Therefore, we only need to prove the maps
\begin{equation}
(u,\rho,w,v)\mapsto (P,P_{x})
\label{408}
\end{equation}
are Lipschitz continuous from $\Omega$ into $L^{2}\cap L^{\infty}$. To this end, it suffices to show that the above maps are Lipschitz continuous from $\Omega$ into $H^{1}$.

In what follows, we derive some estimates for future use. For $(u,w,v)\in\Omega$, we have
\begin{align}
measure\left\{Z\in \mathbb{R}:\left|\frac{w(Z)}{2}\right|\geq \frac{\pi}{4}\right\}
\leq& measure\left\{Z\in \mathbb{R}:\sin^{2}\frac{w(Z)}{2}\geq \frac{1}{4}\right\}\nonumber\\
\leq& 4\int_{\left\{Z\in \mathbb{R}:\sin^{2}\frac{w(Z)}{2}\geq \frac{1}{4}\right\}}\sin^{2}\frac{w(Z)}{2}dZ\nonumber\\
\leq& \int_{\left\{Z\in \mathbb{R}:\sin^{2}\frac{w}{2}\geq \frac{1}{4}\right\}}w^{2}(Z)dZ\nonumber\\
\leq& \mu^{2}.
\label{409}
\end{align}
Therefore, for any $Z_{1}<Z_{2}$, we get
\begin{equation}
\int_{Z_{1}}^{Z_{2}}v(s)\cos^{2}\frac{w(s)}{2}ds\geq \int_{\left\{s\in[Z_{1},Z_{2}],\left|\frac{w(s)}{2}\right|\leq \frac{\pi}{4}\right\}}\frac{v^{-}}{2}ds\geq\frac{v^{-}}{2}\left((Z_{2}-Z_{1})-\mu^{2}\right).
\label{4010}
\end{equation}
The inequality \eqref{4010} is a key estimate which guarantees that the exponential term in the formulate \eqref{3011}--\eqref{3012} for $P$ and $P_{x}$ decreasing quickly as $|Z-Z'|\rightarrow \infty$. Below, we introduce the exponentially decaying function
\begin{equation}
\Lambda(\eta):=\min \left\{1,e^{\left(\frac{\mu^{2}}{2}-\frac{|\eta|}{2}\right)v^{-}}\right\}.
\label{4011}
\end{equation}
Thus, we get
\begin{equation}
\|\Lambda(\eta)\|_{L^{1}}
=\left(\int_{|\eta|\leq \mu^{2}}+\int_{|\eta|\geq \mu^{2}}\right)\Lambda(\eta)d\eta
=2\mu^{2}+\frac{4}{v^{-}}.
\label{4012}
\end{equation}

In what follows, we prove that $P, P_{x}\in H^{1}$, namely,
\begin{equation}
P,\ \partial_{Z}P,\ P_{x},\ \partial_{Z}P_{x}\in L^{2}(\mathbb{R}).
\label{4013}
\end{equation}
It is obvious that the priori estimates for $P$ and $P_{x}$ are totally similar. For simplicity, we only consider the case for $P_{x}$.

From \eqref{3012}, we get
\begin{equation}
|P_{x}(Z)|\leq \frac{v^{+}}{2}\left|\Lambda\ast \left(g(u)\cos^{2} \frac{w}{2}+\frac{f''(u)}{2}\sin^{2} \frac{w}{2}
+\frac{\rho^{2}}{2}\cos^{2} \frac{w}{2}\right)(Z)\right|.
\label{4014}
\end{equation}
Therefore, using the standard properties of convolutions, the Sobolev inequality 
and Young's inequality, we get
\begin{align}
\|P_{x}(Z)\|_{L^{2}}&\leq \frac{v^{+}}{2}\|\Lambda\|_{L^{1}}\left(\|g(u)\|_{L^{2}}
+\frac{|f''(u)|}{2}\|w^{2}\|_{L^{2}}+\frac{1}{2}\|\rho^{2}\|_{L^{2}}\right)\nonumber\\
&\leq C\|\Lambda\|_{L^{1}}\left(\|u\|_{L^{2}}
+\frac{|f''(u)|}{2}\|w\|_{L^{\infty}}\|w\|_{L^{2}}+\|\rho\|_{L^{\infty}}\|\rho\|_{L^{2}}\right)\nonumber\\
&<\infty.
\label{4015}
\end{align}

Next, differentiating $P_{x}$ with respect to $Z$, we get
\begin{align}
\partial_{Z}P_{x}(Z)=&-\left(g(u(Z))\cos^{2} \frac{w(Z)}{2}+\frac{f''(u(Z))}{2}\sin^{2} \frac{w(Z)}{2}+\frac{\rho^{2}}{2}\cos^{2} \frac{w}{2}\right)v(Z)\nonumber\\
&+\frac{1}{2}\left(\int_{Z}^{+\infty}
-\int_{-\infty}^{Z}\right)e^{-\left|\int_{Z}^{Z'}(v\cos^{2} \frac{w}{2})(T,s)ds\right|}v(Z)\cos^{2} \frac{w(Z)}{2}\sign{Z'-Z}\nonumber\\
&\cdot \left(g(u(Z'))\cos^{2} \frac{w(Z')}{2}+\frac{f''(u(Z'))}{2}\sin^{2} \frac{w(Z')}{2}+\frac{\rho^{2}}{2}\cos^{2} \frac{w}{2}\right)v(Z')dZ'.
\label{4016}
\end{align}
Therefore,
\begin{align}
|\partial_{Z}P_{x}(Z)|\leq&v^{+}\left|g(u(Z))+\frac{|f''(u(Z))|}{8}w^{2}+\frac{\rho^{2}}{2}\right|\nonumber\\
&+\frac{(v^{+})^{2}}{2}\left|\Lambda\ast \left(g(u)\cos^{2} \frac{w}{2}+\frac{f''(u)}{2}\sin^{2} \frac{w}{2}+\frac{\rho^{2}}{2}\cos^{2} \frac{w}{2}\right)(Z)\right|.
\label{4017}
\end{align}
Furthermore, applying standard properties of convolutions and Young's inequality, we get
\begin{align}
\|\partial_{Z}P_{x}(Z)\|_{L^{2}}\leq& v^{+}\left(\|g(u)\|_{L^{2}}+\frac{|f''(u)|}{8}\|w^{2}\|_{L^{2}}+\frac{1}{2}\|\rho^{2}\|_{L^{2}}\right)\nonumber\\
&+\frac{(v^{+})^{2}}{2}\|\Lambda\|_{L^{1}} \left(\|g(u)\|_{L^{2}}+\frac{|f''(u)|}{8}\|w^{2}\|_{L^{2}}+\frac{1}{2}\|\rho^{2}\|_{L^{2}}\right)\nonumber\\
\leq& C\left(v^{+}+\frac{(v^{+})^{2}}{2}\|\Lambda\|_{L^{1}}\right)\left(
\|u\|_{L^{2}}+\frac{|f''(u)|}{8}\|w\|_{L^{\infty}}\|w\|_{L^{2}}
+\frac{1}{2}\|\rho\|_{L^{\infty}}\|\rho\|_{L^{2}}\right)\nonumber\\
<& \infty.
\label{4018}
\end{align}
Thus, $P_{x}\in H^{1}(\mathbb{R})$. Note that the estimates for $P$ and $P_{x}$ can be obtained by the same method. Therefore, the proof of the relation \eqref{4013} is completed.

In what follows, we verify the Lipschitz continuity of the map given in \eqref{409}. This can be done by proving that for $(u,\rho,w,v)\in \Omega$, the partial derivatives
\begin{equation}
\frac{\partial P}{\partial u},\ \frac{\partial P}{\partial \rho},\ \frac{\partial P}{\partial w},\ \frac{\partial P}{\partial v},\
\frac{\partial P_{x}}{\partial u},\ \frac{\partial P_{x}}{\partial \rho},\ \frac{\partial P_{x}}{\partial w},\ \frac{\partial P_{x}}{\partial v}
\label{4019}
\end{equation}
are uniformly bounded linear operators from the appropriate spaces into $H^{1}$. Due to the fact that all the partial derivatives can be estimated by the same method, so we only detail the argument for $\frac{\partial P_{x}}{\partial u}$.

For every test function $\phi\in H^{1}$, the operators $\frac{\partial P_{x}}{\partial u}$ and $\frac{\partial (\partial_{Z}P_{x})}{\partial u}$ at a given point $(u,w,v)\in \Omega$ are defined by
\begin{align}
&\left(\frac{\partial P_{x}(u,\rho,w,v)}{\partial u}\cdot \phi\right)(Z)\nonumber\\
=&\frac{1}{2}\left(\int_{Z}^{+\infty}-\int_{-\infty}^{Z}\right)
e^{-\left|\int_{Z}^{Z'}(v\cos^{2} \frac{w}{2})(T,s)ds\right|}\left(g'(u(Z'))\cos^{2} \frac{w(Z')}{2}+\frac{f'''(u(Z'))}{2}\sin^{2} \frac{w(Z')}{2}\right)\nonumber\\
&\cdot v(Z')\phi(Z') dZ'
\label{4020}
\end{align}
and
\begin{align}
&\left(\frac{\partial (\partial_{Z}P_{x})(u,\rho,w,v)}{\partial u}\cdot \phi\right)(Z)\nonumber\\
=&-\left(g'(u(Z))\cos^{2} \frac{w}{2}+\frac{f'''(u(Z))}{2}\sin^{2} \frac{w}{2}\right)v(Z)\phi(Z)\nonumber\\
&+\frac{1}{2}\left(\int_{Z}^{+\infty}-\int_{-\infty}^{Z}\right)
e^{-\left|\int_{Z}^{Z'}(v\cos^{2} \frac{w}{2}ds)(T,s)\right|}v(Z)\cos^{2} \frac{w(Z)}{2}\sign{Z'-Z}\nonumber\\
&\cdot \left(g'(u(Z'))\cos^{2} \frac{w(Z')}{2}+\frac{f'''(u(Z'))}{2}\sin^{2} \frac{w(Z')}{2}\right)v(Z')\phi(Z') dZ'.
\label{4021}
\end{align}
Therefore, we obtain
\begin{align}
\left\|\frac{\partial P_{x}}{\partial u}\cdot \phi\right\|_{L^{2}}
\leq&v^{+}\left\|\Lambda\ast \left(g'(u)+\frac{f'''(u)}{2}\right)\right\|_{L^{2}}\|\phi\|_{L^{\infty}}\nonumber\\
\leq& Cv^{+}\|\Lambda\|_{L^{1}} \|u\|_{L^{2}}\|\phi\|_{1}\nonumber\\
<& \infty
\label{4022}
\end{align}
and
\begin{align}
&\left\|\frac{\partial(\partial_{Z} P_{x})}{\partial u}\cdot \phi\right\|_{L^{2}}\nonumber\\
\leq& v^{+}\left\|g'(u)+\frac{f'''(u)}{2}\right\|_{L^{2}}\|\phi\|_{L^{\infty}}
+\frac{(v^{+})^{2}}{2}\left\|\Lambda\ast \left(g'(u)+\frac{f'''(u)}{2}\right)\right\|_{L^{2}}\|\phi\|_{L^{\infty}}\nonumber\\
\leq&\left(v^{+}+\frac{(v^{+})^{2}}{2}\|\Lambda\|_{L^{1}}\right) \left(\|g'(u)+\frac{f'''(u)}{2}\|_{L^{2}}\right)\|\phi\|_{1}\nonumber\\
<& \infty,
\label{4023}
\end{align}
where we used the facts that
$$\|u\|_{L^{\infty}}\leq \|u\|_{1},\ \
\|\phi\|_{L^{\infty}}\leq \|\phi\|_{1}
\ \text{and}\ |g'(u)|\leq C(\|u\|_{1})|u|.$$
Hence we obtain that $\frac{\partial P_{x}}{\partial u}$ is a bounded linear operator from $H^{1}$ into $H^{1}$. As above, we can obtain the boundedness of other partial derivatives, thus the uniform Lipschitz continuous of the map in \eqref{408} is verified. Then using the standard ODE theory in the Banach space, we can establish the local existence of solutions for the Cauchy problem \eqref{401}--\eqref{402}, namely, the Cauchy problem \eqref{401}--\eqref{402} has a unique solution on $[0,T]$ for some $T>0$.
This completes the proof of Lemma \ref{lem401}.
\end{proof}

In what follows, we extend the local solution obtained in Lemma \ref{lem401} globally. To this end, it suffices to prove that for all $T<\infty$,
\begin{equation}
\|u\|_{1}+\|\rho\|_{L^{2}}+\|\rho\|_{L^{\infty}}+\|w\|_{L^{2}}+\|w\|_{L^{\infty}}
+\|v\|_{L^{\infty}}+\left\|\frac{1}{v}\right\|_{L^{\infty}}<\infty.
\label{4024}
\end{equation}

\begin{lemma}\label{lem402}
Let $(\bar{u},\bar{\rho})\in H^{1}\times\left(L^{2}\cap L^{\infty}\right)$. Then the Cauchy problem \eqref{401}--\eqref{402} has a unique solution defined for all $T>0$.
\end{lemma}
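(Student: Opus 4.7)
The plan is to invoke the continuation criterion for ODEs in the Banach space $X$: by Lemma \ref{lem401}, the solution extends globally as long as $\|(u,\rho,w,v)\|_{X}$ together with $\|1/v\|_{L^{\infty}}$ remains finite on every bounded interval. Thus it suffices to establish the a priori bound \eqref{4024} on an arbitrary $[0,T]$.

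First I would translate the energy identity \eqref{207} into the new coordinates. Using $dx=v\cos^{2}(w/2)\,dZ$ and the identities \eqref{308}, one obtains the conserved quantity
$$\mathcal{E}(T):=\int_{\mathbb{R}}\Big[(u^{2}+\rho^{2})\cos^{2}\tfrac{w}{2}+\sin^{2}\tfrac{w}{2}\Big]v(T,Z)\,dZ = E(0).$$
From $u_{Z}=u_{x}x_{Z}=\tfrac{1}{2}v\sin w$, writing $u^{2}(T,Z)=\int_{-\infty}^{Z} uv\sin w\,dZ'$ and using $2|uv\sin(w/2)\cos(w/2)|\leq u^{2}v\cos^{2}(w/2)+v\sin^{2}(w/2)$ yields $\|u\|_{L^{\infty}}^{2}\leq\mathcal{E}(T)=E(0)$. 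Combining this with the convolution representations \eqref{3011}--\eqref{3012} and the decay estimate \eqref{4010} produces uniform bounds on $\|P\|_{L^{\infty}}$ and $\|P_{x}\|_{L^{\infty}}$ in terms of $E(0)$ and the current $L^{\infty}$-norm of $\rho$.

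Next I would close a coupled Gronwall system for $v$, $1/v$ and $\rho$. By the choice of representative, $\|w\|_{L^{\infty}}\leq\pi$. The fourth equation of \eqref{401} gives $(\ln v)_{T}=(g(u)-P+\tfrac{1}{2}f''(u)+\tfrac{1}{2}\rho^{2})\sin w$, so controlling $\|v\|_{L^{\infty}}$ and $\|1/v\|_{L^{\infty}}$ reduces to controlling $\int_{0}^{T}\|\rho(s)\|_{L^{\infty}}^{2}\,ds$. For $\rho$ itself, the characteristic ODE $\rho_{T}=-(\tfrac{1}{2}+\tfrac{1}{2}f''(u))\rho\tan(w/2)$ carries a singular factor as $w\to\pm\pi$; the remedy is to work with the rescaled quantity $m:=\rho\,v\cos(w/2)$, which represents the physical density $\rho\sqrt{dx/dZ}\cdot\sqrt{v}$. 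A direct computation shows that the product $\rho_{T}\cdot\cos(w/2)=-(\tfrac{1}{2}+\tfrac{1}{2}f''(u))\rho\sin(w/2)$ cancels the singular factor, and the remaining terms coming from $v_{T}$ and $w_{T}$ involve only the bounded trigonometric factors together with $u,\rho,v$ and the already-bounded $P$. Applying Gronwall's lemma to $\|m\|_{L^{\infty}}$ and combining with the energy-based control $\int m^{2}dZ\leq E(0)$ and the pointwise bound on $v\cos(w/2)$ gives $\|\rho\|_{L^{\infty}}<\infty$ on $[0,T]$. The remaining $L^{2}$-norms $\|w\|_{L^{2}}$ and $\|\rho\|_{L^{2}}$ are then controlled by differentiating $\int w^{2}dZ$ and $\int\rho^{2}dZ$, substituting \eqref{401}, and closing with Gronwall; $\|u\|_{1}$ follows from $\|u\|_{L^{\infty}}$ together with the identity $\int u_{x}^{2}dx=\int\sin^{2}(w/2)v\,dZ\leq\mathcal{E}(T)$.

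The main obstacle is the singular factor $\tan(w/2)$ in the $\rho$-equation at the wave-breaking values $w=\pm\pi$, which prevents a direct pointwise Gronwall bound on $\rho$. The resolution, as sketched, is to absorb the singularity into the weighted variable $m=\rho v\cos(w/2)$: this simultaneously matches the physical density and regularises the evolution, reducing the problem to a coupled but singularity-free Gronwall system for $\|u\|_{L^{\infty}}, \|\rho\|_{L^{\infty}}, \|w\|_{L^{2}}, \|v\|_{L^{\infty}}$ and $\|1/v\|_{L^{\infty}}$. Once \eqref{4024} is proved, the continuation criterion at once yields the global existence asserted in Lemma \ref{lem402}.
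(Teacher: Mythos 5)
Your overall skeleton (continuation criterion in $X$, the conserved energy in the new variables, the bound $\|u\|_{L^{\infty}}^{2}\leq \bar E(0)$ via $u_{Z}=\tfrac12 v\sin w$, uniform bounds on $P,P_{x}$, then Gronwall estimates for $v$, $1/v$, $w$, $u$, $\rho$) is the same as the paper's. The genuine gap is at the step you yourself single out as the main obstacle: the weighted variable $m=\rho v\cos\frac w2$ does \emph{not} remove the singular factor $\tan\frac w2$. Indeed, from \eqref{401},
\begin{align*}
m_{T}&=\rho_{T}\,v\cos\tfrac w2+\rho\,v_{T}\cos\tfrac w2-\tfrac12\,\rho v\,w_{T}\sin\tfrac w2\\
&=\Big[\big(g(u)-P+\tfrac{f''(u)}{2}+\tfrac{\rho^{2}}{2}\big)\cos^{2}\tfrac w2-\tfrac12\Big]\rho v\sin\tfrac w2
=\Big[\big(g(u)-P+\tfrac{f''(u)}{2}+\tfrac{\rho^{2}}{2}\big)\cos^{2}\tfrac w2-\tfrac12\Big]m\tan\tfrac w2 .
\end{align*}
The bracket tends to $-\tfrac12\neq 0$ as $w\to\pm\pi$, so the equation for $m$ still carries the unbounded coefficient $\tan\frac w2$; the cancellation you observe in the term $\rho_{T}\cos\frac w2$ only changes the constant in front of the singular part (from $\tfrac{1+f''}{2}$ to $\tfrac12$), and the $\rho^{2}\cos^{2}\frac w2$ term inside the bracket reintroduces $\|\rho\|_{L^{\infty}}$ besides. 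Hence the pointwise Gronwall bound on $\|m\|_{L^{\infty}}$ does not close, and since you route the control of $v$, $1/v$ and $w$ through $\int_{0}^{T}\|\rho(s)\|_{L^{\infty}}^{2}ds$, the whole coupled system stays open and \eqref{4024} is not established. Nor can any fixed weight repair this in the generality of Lemma \ref{lem402}: for $m_{\alpha}=\rho\, v^{\beta}(\cos\frac w2)^{\alpha}$ the singular part of $\partial_{T}\ln|m_{\alpha}|$ vanishes only if $\alpha=1+1/f''(u)$, which is a constant exponent only when $f''$ is constant (for the two-component CH case $f''\equiv 1$ this is the familiar Jacobian weight $\rho x_{Z}=\rho v\cos^{2}\frac w2$).

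For comparison, the paper uses no weighted variable at all: it first verifies the conservation law \eqref{4029} \emph{directly} from the ODE system via \eqref{4030}--\eqref{4031} (your derivation of it by changing variables in \eqref{207} presupposes the correspondence with the PDE, which is only set up in Section~5), then obtains $\|u\|_{L^{\infty}},\|P\|_{L^{\infty}},\|P_{x}\|_{L^{\infty}}\leq C\bar E(0)$, the two-sided bound \eqref{4040} on $v$, the linearly growing bound \eqref{4042} on $\|w\|_{L^{\infty}}$ (your claim that $\|w\|_{L^{\infty}}\leq\pi$ ``by choice of representative'' is not legitimate along the flow, since $w$ solves an ODE in $T$ and can leave $[-\pi,\pi]$), the $H^{1}$ bound on $u$ through $\|P_{x}\|_{L^{1}}$, $\|\partial_{Z}P_{x}\|_{L^{1}}$ with the kernel $\Gamma$, and finally reads the bounds $\|\rho\|_{L^{\infty}}\leq e^{KT}$ and $\frac{d}{dt}\|\rho\|_{L^{2}}^{2}\leq 2K\|\rho\|_{L^{2}}^{2}$ off the second equation of \eqref{401} with $K=K(E(0))$. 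You correctly flag that this $\rho$-step is the delicate one, but the substitute you propose fails precisely there, so the proposal does not prove the lemma.
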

\begin{proof}
For the local solution obtained in Lemma \ref{lem401}, we claim that
\begin{equation}
u_{Z}=\frac{u_{x}}{Z_{x}}
=\frac{u_{x}}{1+u_{x}^{2}}v
=\frac{1}{2}v\sin w.
\label{4025}
\end{equation}
Indeed, from \eqref{401}, we have
\begin{align}
u_{ZT}=u_{TZ}=&-\partial_{Z}P_{x}\nonumber\\
=&\left(\left(g(u)-P(Z)+\frac{\rho^{2}}{2}\right)\cos^{2} \frac{w}{2}+\frac{f''(u)}{2}\sin^{2}\frac{w}{2}\right)v(Z).
\label{4026}
\end{align}
On the other hand,
\begin{align}
&\left(\frac{1}{2}v\sin w\right)_{T}\nonumber\\
=&\frac{1}{2}v_{T}\sin w+\frac{1}{2}vw_{T}\cos w\nonumber\\
=&\frac{1}{2}v\sin^{2} w\left(g(u)-P(Z)+\frac{f''(u)}{2}+\frac{\rho^{2}}{2}\right)
+\frac{v}{2}\cos w\Big(2\left(g(u)-P(Z)+\frac{\rho^{2}}{2}\right)\cos^{2} \frac{w}{2}
\nonumber\\
&-f''(u)\sin^{2} \frac{w}{2}\Big)\nonumber\\
=&\left(\left(g(u)-P(Z)+\frac{\rho^{2}}{2}\right)\cos^{2} \frac{w}{2}+\frac{f''(u)}{2}\sin^{2}\frac{w}{2}\right)v(Z).
\label{4027}
\end{align}
Applying the initial data, we know
\begin{equation}
u_{Z}=\frac{1}{2}\sin w\ \ \text{and}
\ \ v=1,\ \ as\ \ T=0,
\label{4028}
\end{equation}
which means that \eqref{4025} holds initially. Thus, we infer that \eqref{4025} remains valid for all $T$ as long as the solution exists.

In what follows, we verify the boundedness of \eqref{4024}. To this end, we check the conservation law $E(t)$. In the new system \eqref{401}--\eqref{402}, the conservation law of $E(T)$ read
\begin{equation}
E(T)=\int_{\mathbb{R}}\left(u^{2}\cos^{2}\frac{w}{2}
+\sin^{2}\frac{w}{2}+\rho^{2}\cos^{2}\frac{w}{2}\right)v(T,Z)dZ=\bar{E}(0).
\label{4029}
\end{equation}
To prove \eqref{4029}, it is useful to give the following identities in terms of the $Z$-derivatives.
\begin{equation}
P_{Z}=v(Z) P_{x}(Z)\cos^{2} \frac{w(Z)}{2}
\label{4030}
\end{equation}
and
\begin{equation}
\partial_{Z}P_{x}=-\left((g(u)-P(Z))\cos^{2} \frac{w}{2}+\frac{f''(u)}{2}\sin^{2}\frac{w}{2}+\frac{\rho^{2}}{2}\cos^{2}\frac{w}{2}\right)v(Z).
\label{4031}
\end{equation}

Applying \eqref{401}, \eqref{4030} and \eqref{4031}, a direct calculation reveals that
\begin{align}
\frac{dE(T)}{dT}=&\int_{\mathbb{R}}\left(\left(u^{2}\cos^{2}\frac{w}{2}
+\sin^{2}\frac{w}{2}+\rho^{2}\cos^{2}\frac{w}{2}\right)v(T,Z)\right)_{T}dZ\nonumber\\
=&\int_{\mathbb{R}}\bigg(\Big(2uu_{T}\cos^{2}\frac{w}{2}
-u^{2}w_{T}\cos\frac{w}{2}\sin\frac{w}{2}
+w_{T}\sin \frac{w}{2}\cos \frac{w}{2}\nonumber\\
&+2\rho\rho_{T}\cos^{2}\frac{w}{2}
-\rho^{2}w_{T}\cos\frac{w}{2}\sin\frac{w}{2}\Big)v
+\left(u^{2}\cos^{2}\frac{w}{2}+\sin^{2}\frac{w}{2}
+\rho^{2}\cos^{2}\frac{w}{2}\right)v_{T}\bigg)dZ\nonumber\\
=&\int_{\mathbb{R}}\left(-2uP_{x}\cos^{2}\frac{w}{2}+\frac{f''(u)}{2}u^{2}\sin w
+(g(u)-P(Z))\sin w\right)v(T,Z)dZ.
\label{4032}
\end{align}
In view of \eqref{4025} and \eqref{4030}, we have
\begin{equation}
(uP)_{Z}=u_{Z}P+uP_{Z}=vP\sin \frac{w}{2}\cos \frac{w}{2}+uvP_{x}\cos^{2} \frac{w}{2}
\label{4033}
\end{equation}
and
\begin{equation}
g(u)v\sin w=g(u)u_{Z}=(G(u))_{Z},
\label{4034}
\end{equation}
where $G(u)=\int_{0}^{u}g(s)ds$.

On the other hand,
\begin{align}
\frac{f''(u)}{2}vu^{2}\sin w
=&f''(u)u^{2}u_{Z}\nonumber\\
=&\left(f'(u)u^{2}\right)_{Z}-2f'(u)uu_{Z}\nonumber\\
=&\left(f'(u)u^{2}\right)_{Z}-2\left((f(u)u)_{Z}-f(u)u_{Z}\right)\nonumber\\
=&\left(f'(u)u^{2}\right)_{Z}-2(f(u)u)_{Z}+2f(u)u_{Z}\nonumber\\
=&\left(f'(u)u^{2}\right)_{Z}-2(f(u)u)_{Z}+2(F(u))_{Z},
\label{4035}
\end{align}
where $F(u)=\int_{0}^{u}f(s)ds$.

Therefore,
\begin{align}
\frac{dE(T)}{dT}=&\int_{\mathbb{R}}\left(\left(u^{2}\cos^{2}\frac{w}{2}
+\sin^{2}\frac{w}{2}+\rho^{2}\cos^{2}\frac{w}{2}\right)v(T,Z)\right)_{T}dZ\nonumber\\
=&\int_{\mathbb{R}}\left(G(u)-2uP+f'(u)u^{2}-2f(u)u+2F(u)\right)_{Z}dZ\nonumber\\
=&0,
\label{4036}
\end{align}
where in deriving the last equality we have used the asymptotic property
\begin{equation*}
\lim_{|Z|\rightarrow \infty}u(Z)=0\ \ \text{as}\ \ u\in H^{1}(\mathbb{R}),
\end{equation*}
and the fact that $P(Z)$ is uniformly bounded. This proves \eqref{4029}.

We have now proved the conservation law \eqref{4029} in the new variables along any solution of \eqref{401}--\eqref{402}. In what follows, we use the conservation law \eqref{4029} to derive a priori estimate on $\|u(T)\|_{L^{\infty}}$. It is clear that
\begin{align}
\sup_{Z\in\mathbb{R}}\left|u^{2}(T,Z)\right|
\leq 2\int_{\mathbb{R}}\left|uu_{Z}\right|dZ
&\leq 2\int_{\mathbb{R}}\left|u\sin \frac{w}{2} \cos \frac{w}{2}\right|v dZ\nonumber\\
&\leq \int_{\mathbb{R}} \left|\sin^{2} \frac{w}{2}+u^{2}\cos^{2} \frac{w}{2}\right|vdZ\nonumber\\
&\leq \bar{E}(0).
\label{4037}
\end{align}

From \eqref{4029} and \eqref{3011}, we can easily verify
\begin{align}
\|P(T)\|_{L^{\infty}}\leq& \frac{1}{2}\left\|e^{-|x|}\right\|_{L^{\infty}}
\left\|g(u)+\frac{f''(u)}{2}u_{x}^{2}+\frac{\rho^{2}}{2}\right\|_{L^{1}}
\leq C\bar{E}(0).
\label{4038}
\end{align}
Similarly, we can obtain
\begin{equation*}
\|P_{x}(T)\|_{L^{\infty}}\leq C\bar{E}(0).
\end{equation*}
Hence we recover the estimate \eqref{2010} in the new variables.

Below, using the estimates \eqref{4037}, \eqref{4038} and the third equation in system \eqref{401}, we can prove the  $L^{\infty}$ bound for $v(T,Z)$. Indeed, we have
\begin{equation}
|v_{T}(T,Z)|\leq C\bar{E}(0)v(T,Z).
\label{4039}
\end{equation}
Since $v(0,Z)=1$, \eqref{4039} yields
\begin{equation}
e^{-C\bar{E}(0)T}\leq v(T,Z)\leq e^{C\bar{E}(0)T}.
\label{4040}
\end{equation}
Similarly, it follows from the second equation of system \eqref{401} that
\begin{equation}
|w_{T}(T,Z)|\leq C,
\label{4041}
\end{equation}
where $C=C(\bar{E}(0))>0$. Consequently,
\begin{equation}
\|w(T,Z)\|_{L^{\infty}}\leq \|w(0,Z)\|_{L^{\infty}}+CT.
\label{4042}
\end{equation}

In what follows, we prove that $\|u\|_{1}$ is bounded for any bounded internal of time $T$. To this end, multiplying $2u$ to the first equation of system \eqref{401}, we get
\begin{equation}
\frac{d}{dT}\|u(T)\|_{L^{2}}^{2}\leq 2\|u(T)\|_{L^{\infty}}\|P_{x}(T)\|_{L^{1}}.
\label{4043}
\end{equation}
Differentiating the first equation of system \eqref{401} with respect to $Z$, we get
\begin{equation}
u_{TZ}(T,Z)=-\partial_{Z}P_{x}(T,Z).
\label{4044}
\end{equation}
Multiplying \eqref{4044} by $2u_{Z}$, we obtain
\begin{equation}
\frac{d}{dT}\|u_{Z}(T)\|_{L^{2}}^{2}\leq 2\|u_{Z}(T)\|_{L^{\infty}}\|\partial_{Z}P_{x}(T)\|_{L^{1}}.
\label{4045}
\end{equation}

On the other hand, it is known from \eqref{4025} that
\begin{equation}
\|u_{Z}(T)\|_{L^{\infty}}\leq \frac{1}{2}\|v(T)\|_{L^{\infty}}\leq \frac{1}{2}e^{C\bar{E}(0)T}.
\label{4046}
\end{equation}
In order to prove that $\|u\|_{1}$ is bounded for any $T<\infty$, it suffices to show $\|P_{x}(T)\|_{L^{1}}$ and $\|\partial_{Z}P_{x}(T)\|_{L^{1}}$ are bounded. It is observed that these two terms can be estimated by the similar method, we only need to consider
$\|\partial_{Z}P_{x}(T)\|_{L^{1}}$.

Indeed, for $Z<Z'$, we have
\begin{align}
\int^{Z'}_{Z}(v\cos^{2} \frac{w}{2})(T,s)ds
\geq&\int_{\big\{s\in[Z,Z'],\left|\frac{w}{2}\right|\leq\frac{\pi}{4}\big\}}(v\cos^{2} \frac{w}{2})(s)ds\nonumber\\
\geq&\int_{\big\{s\in[Z,Z'],\left|\frac{w}{2}\right|\leq\frac{\pi}{4}\big\}}\frac{v(s)}{2}ds\nonumber\\
\geq&\frac{v^{-}}{2}(Z'-Z)
-\int_{\big\{s\in[Z,Z'],\left|\frac{w}{2}\right|\geq\frac{\pi}{4}\big\}}\frac{v(s)}{2}ds\nonumber\\
\geq&\frac{v^{-}}{2}(Z'-Z)
-\int_{\big\{s\in[Z,Z'],\left|\frac{w}{2}\right|\geq\frac{\pi}{4}\big\}}v(s)\sin^{2}\frac{w(s)}{2}ds
\nonumber\\
\geq&\frac{v^{-}}{2}(Z'-Z)-\bar{E}(0).
\label{4047}
\end{align}
where $v^{-}=e^{-C\bar{E}(0)T}$.

Below, we introduce the exponentially decaying function
\begin{equation}
\Gamma(\eta):=\min\left\{1,e^{\bar{E}(0)-\frac{v^{-}|\eta|}{2}}\right\}
\label{4048}
\end{equation}
with
\begin{equation}
\|\Gamma(\eta)\|_{L^{1}}=\frac{4(\bar{E}(0)+1)}{v^{-}}=4e^{C\bar{E}(0)T}(\bar{E}(0)+1).
\label{4049}
\end{equation}
Therefore, from \eqref{4031} and \eqref{4049}, we get
\begin{align}
\|\partial_{Z}P_{x}\|_{L^{1}}
=&\left\|-\left(g(u)\cos^{2}\frac{w}{2}+\frac{f''(u)}{2}\sin^{2}\frac{w}{2}
+\frac{\rho^{2}}{2}\cos^{2}\frac{w}{2}\right)v
+vP\cos^{2}\frac{w}{2}\right\|_{L^{1}}\nonumber\\
\leq&C\bar{E}(0)+\frac{v^{+}}{2}\left\|\Gamma\ast
\left(g(u)\cos^{2}\frac{w}{2}+\frac{f''(u)}{2}\sin^{2}\frac{w}{2}+\frac{\rho^{2}}{2}\cos^{2}\frac{w}{2}\right)v\right\|_{L^{1}}\nonumber\\
\leq&C\bar{E}(0)+\frac{v^{+}}{2}\|\Gamma\|_{L^{1}}
\left\|\left(g(u)\cos^{2}\frac{w}{2}+\frac{f''(u)}{2}\sin^{2}\frac{w}{2}+\frac{\rho^{2}}{2}\cos^{2}\frac{w}{2}\right)v\right\|_{L^{\infty}}\nonumber\\
\leq&C\bar{E}(0)+Cv^{+}e^{C\bar{E}(0)T}(\bar{E}(0)+1)\bar{E}(0)\nonumber\\
<&\infty.
\label{4050}
\end{align}
It then turns out that $\|u\|_{1}$ is bounded on the bounded internals of time $T$.

From the second equation in \eqref{401}, we can obtain that
\begin{equation*}
\|\rho\|_{L^{\infty}}\leq e^{KT},
\end{equation*}
and
\begin{equation*}
\frac{d}{dt}\|\rho\|_{L^{2}}^{2}\leq 2K \|\rho\|_{L^{2}}^{2},
\end{equation*}
for a suitable constant $K=K(E(0))$, which implies that $\|\rho\|_{L^{\infty}}$ and $\|\rho\|_{L^{2}}$ remain bounded on bounded intervals of time.

Finally, multiplying $2w$ to the third equation of system \eqref{401}, we get
\begin{align}
\frac{d}{dT}\|w(T)\|_{L^{2}}^{2}
\leq& 2\int_{\mathbb{R}}|(2g(u)-2P+\rho^{2})w|dz+\frac{|f''(u)|}{2}\int_{\mathbb{R}}|w^{3}|dZ\nonumber\\
\leq& C(\|u\|_{L^{2}}+\|P\|_{L^{2}}+\|\rho\|_{L^{2}}^{2})
\|w\|_{L^{\infty}}+\frac{|f''(u)|}{2}\|w\|_{L^{\infty}}\|w\|_{L^{2}}^{2}
.
\label{4051}
\end{align}
By the previous bounds, it is clear that $\|w\|_{L^{2}}$ remains bounded on bounded internals of time $T$. This completes the proof that the local solution of system \eqref{401} can be extended globally in time.
\end{proof}

Furthermore, similar to the result in \cite{Bressan2007}, we have the following property for the global solution in Lemma \eqref{lem402}.
\begin{lemma}\label{lem403}
Consider the set of time
\begin{equation*}
\Theta:=\{T\geq 0,\ measure\{Z\in \mathbb{R}:w(T,Z)=-\pi\}>0\}.
\end{equation*}
Then
\begin{equation}
measure(\Theta)=0.
\label{4052}
\end{equation}
\end{lemma}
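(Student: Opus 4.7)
The plan is to apply Fubini's theorem to translate the measure-of-times statement into a pointwise-in-$Z$ statement about individual trajectories $T \mapsto w(T,Z)$ of the semi-linear system \eqref{401}, and then to exploit the $C^1$-regularity of these trajectories together with the explicit form of the $w$-equation at the value $-\pi$.

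First I would fix an arbitrary $T^* < \infty$ and set
\begin{equation*}
A := \{(T,Z) \in [0,T^*] \times \mathbb{R} : w(T,Z) = -\pi\}.
\end{equation*}
Fubini's theorem gives
\begin{equation*}
\int_0^{T^*} \mathrm{meas}\{Z : w(T,Z) = -\pi\}\, dT \;=\; \mathrm{meas}(A) \;=\; \int_{\mathbb{R}} \mathrm{meas}\{T \in [0,T^*] : w(T,Z) = -\pi\}\, dZ,
\end{equation*}
so if $\mathrm{meas}(\Theta \cap [0,T^*]) > 0$ then $\mathrm{meas}(A) > 0$, and in turn there is a positive-measure set $B \subset \mathbb{R}$ such that for every $Z \in B$ the level set $S_Z := \{T \in [0,T^*] : w(T,Z) = -\pi\}$ has positive one-dimensional Lebesgue measure. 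The goal then becomes showing this is impossible.

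For every fixed $Z$, Lemma \ref{lem402} ensures that $T \mapsto w(T,Z)$ belongs to $C^1([0,T^*])$. By the classical fact that the derivative of a $C^1$ function vanishes almost everywhere on any of its level sets, this forces $w_T(T,Z) = 0$ for a.e.\ $T \in S_Z$. On the other hand, evaluating the third equation of \eqref{401} at $w = -\pi$, where $\cos^2(w/2) = 0$ and $\sin^2(w/2) = 1$, yields
\begin{equation*}
w_T(T,Z) \;=\; -\, f''(u(T,Z)) \qquad \text{on } A.
\end{equation*}
Combining the two identities gives $f''(u(T,Z)) = 0$ for a.e.\ $(T,Z)\in A$ with $Z\in B$, a first algebraic constraint on trajectories that dwell at $-\pi$.

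The main obstacle is to promote this single scalar identity into a genuine contradiction. Here I would follow the iterated-differentiation argument introduced in \cite{Bressan2007} and adapted in \cite{Mustafa20072,Zhou2024}: at density points of $S_Z$ not only $w_T$ but also $w_{TT},w_{TTT},\dots$ vanish, and computing these derivatives from the $w$-equation produces further pointwise relations such as $f'''(u)\,P_x = 0$. Propagating these constraints through the first equation $u_T = -P_x$, through the identity $v_T = 0$ at $w=-\pi$ (which follows from $\sin w =0$ in the $v$-equation), through the convolution representations \eqref{3011}--\eqref{3012} for $P$ and $P_x$, and combining them with the strict lower bound $v \geq e^{-C\bar E(0) T}$ from \eqref{4040} and the conservation law \eqref{4029}, one obtains an over-determined system incompatible with $\mathrm{meas}(A) > 0$. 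This forces $\mathrm{meas}(A)=0$ and hence $\mathrm{meas}(\Theta)=0$, completing the argument in complete parallel to the Camassa--Holm case of \cite{Bressan2007}.
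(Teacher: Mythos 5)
Your first half is exactly the Bressan--Constantin argument that the paper itself silently invokes (the paper gives no proof of Lemma \ref{lem403} beyond the phrase ``similar to \cite{Bressan2007}''): Fubini to reduce to single trajectories, the fact that an absolutely continuous function has vanishing derivative a.e.\ on each of its level sets, and evaluation of the third equation of \eqref{401} at $w=-\pi$. In the Camassa--Holm case that evaluation gives $w_{T}=-1\neq 0$, which contradicts $w_{T}=0$ a.e.\ on a level set of positive measure, and the proof ends there. (A minor point: the $C^{1}$ regularity of $T\mapsto w(T,Z)$ for \emph{every} $Z$ does not follow verbatim from Lemma \ref{lem402}, which produces a Banach-space valued solution; one should say that a suitable representative makes the trajectories absolutely continuous and the ODE hold pointwise for a.e.\ $Z$ --- this is standard and not the real issue.)

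The real issue is your last paragraph, and there the argument has a genuine gap. In the present setting the evaluation at $w=-\pi$ only yields $w_{T}=-f''(u)$, so, as you note, you merely obtain $f''(u(T,Z))=0$ a.e.\ on the offending set --- no contradiction yet. The ``iterated differentiation'' step that is supposed to close the argument is asserted, not carried out: the relations you name (e.g.\ $f'''(u)P_{x}=0$, $v_{T}=0$) are neither derived nor shown to be mutually incompatible, and in general they are \emph{not} incompatible. If $f''$ vanishes identically on an interval of $u$-values (say $f$ is affine there), then at $w=-\pi$ the right-hand side of the $w$-equation vanishes together with all the higher-order expressions your scheme would produce, so no algebraic over-determination arises; in that degenerate situation the conclusion must come from a different mechanism (for instance, $w\equiv-\pi$ is then an equilibrium of an ODE with Lipschitz right-hand side, so a trajectory starting in $(-\pi,\pi)$ cannot reach $-\pi$ in finite time), or one must impose a non-degeneracy hypothesis such as $f''(u)\neq 0$, which is what makes the cited CH and hyperelastic-rod cases ($f''\equiv 1$, resp.\ $f''\equiv k$) immediate. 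So the step ``one obtains an over-determined system incompatible with $\mathrm{meas}(A)>0$'' is precisely the point at which the general-$f$ case differs from \cite{Bressan2007}, and as written it is a claim rather than a proof; to fix it you should either state and use an assumption on $f''$ (nonvanishing, or bounded away from zero on the range of $u$ allowed by \eqref{4037}), or split the set $A$ according to whether $f''(u)=0$ and treat the degenerate part by the equilibrium/non-attainability argument sketched above.
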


\section{Solutions to the nonlinear dispersive wave equations}
\setcounter{equation}{0}

\label{sec:5}

In this section, we construct the weak solution to \eqref{201}-\eqref{201a} by using an inverse translation on the solution of system \eqref{401}.

We define $t$ and $x$ as functions of $T$ and $Z$ by
\begin{equation}
x(T,Z)=\bar{x}(Z)+\int_{0}^{T}f'(u(\zeta,Z))d\zeta,\ \ t=T.
\label{501}
\end{equation}
Thus the above function $x(T,Z)$ provides a solution to the following initial problem
\begin{equation}
\frac{\partial x(T,Z)}{\partial T}=f'(u(T,Z)),\ \ x(0,Z)=\bar{x}(Z),
\label{502}
\end{equation}
which means that $x(T,Z)$ is a characteristic.

In what follows, we will prove that the function
\begin{equation}
u(t,x)=u(T,Z),\ \ \text{if}\ \ t=T,\ x=x(T,Z),
\label{503}
\end{equation}
provides a weak solution of \eqref{201}-\eqref{201a}.

\begin{definition}\label{def501}
By a solution of the Cauchy problem \eqref{201}--\eqref{201} on $[t_{1},t_{2}]$ we mean a pair of H\"older continuous $(u(t,x),\rho(t,x))$ defined on $[t_{1},t_{2}]\times \mathbb{R}$ with the following properties. At each fixed $t$, we have $(u(t,\cdot),\rho(t,\cdot))\in H^{1}\times (L^{2}\cap L^{\infty})$. Moreover, the maps
$t\mapsto (u(t,\cdot),\rho(t,\cdot))$ are Lipschitz continuous from $[t_{1},t_{2}]$ into $L^{2}(\mathbb{R})$, and satisfy the initial data \eqref{103} together with
\begin{equation}
\begin{cases}
u_{t}=-f'(u)u_{x}-P_{x},\\
\rho_{t}=-f'(u)\rho_{x}-\left(\frac{1}{2}+\frac{f''}{2}\right)u_{x}\rho,
\end{cases}
\label{504}
\end{equation}
for a.e. $t$. Here \eqref{504} is understood as an equatlity between functions in $L^{2}(\mathbb{R})$.
\end{definition}

In what follows, we state the main result on the global well-posedness of the energy conservative solution for \eqref{201}-\eqref{201a}.

\begin{theorem}\label{the501}
Let the initial data $(\bar{u}(x),\bar{\rho}(x))\in H^{1}\times (L^{2}\cap L^{\infty})$. Then problem \eqref{201}-\eqref{201a} with the initial data $(\bar{u}(x),\bar{\rho}(x))$ has a global energy conservative solution $(u(t,x),\rho(t,x))$ in the sense of Definition \ref{def501}. Furthermore, the solution $(u(t,x),\rho(t,x))$ satisfy the following properties:\\
(i) $(u(t,x),\rho(t,x))$ are uniformly H\"older continuous with exponent $\frac{1}{2}$ on both $t$ and $x$.\\
(ii) The energy $u^{2}+u_{x}^{2}+\rho^{2}$ is almost conserved, i.e.,
\begin{equation}
\|u\|_{1}^{2}+\|\rho\|_{L^{2}}^{2}=\|\bar{u}\|_{1}^{2}+\|\bar{\rho}\|_{L^{2}}^{2},\ \ \text{for\ a.e.}\ \ t\in \mathbb{R}_{+}.
\label{505}
\end{equation}
(iii) The solution $(u(t,x),\rho(t,x))$ are continuously depending on the initial datum $(\bar{u}(x),\bar{\rho}(x))$. That is, let $(\bar{u}_{n}(x),\bar{\rho}_{n}(x))$ be a sequence of initial data such that
\begin{equation*}
\|\bar{u}_{n}-\bar{u}\|_{1}\rightarrow 0,
\|\bar{\rho}_{n}-\bar{\rho}\|_{L^{2}}\rightarrow 0,
\|\bar{\rho}_{n}-\bar{\rho}\|_{L^{\infty}}\rightarrow 0,
\ \ \text{as}\ \ n\rightarrow \infty.
\end{equation*}
Then the corresponding solutions $(u_{n}(t,x),\rho_{n}(t,x))$ converge to $(u(t,x),\rho(t,x))$ uniformly for $(t,x)$ in any bounded sets.
\end{theorem}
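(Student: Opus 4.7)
The plan is to construct $(u(t,x),\rho(t,x))$ by the inverse transformation (5.1)--(5.3) from the global solution of the semi-linear system furnished by Lemma \ref{lem402}, and then verify each assertion of the theorem in turn.

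First, I would check that $Z \mapsto x(T,Z)$ is non-decreasing for every $T$, which follows from $\partial_{Z} x = v\cos^{2}(w/2) \geq 0$ (compare (3.9)) together with the strictly positive lower bound on $v$ from (4.40). The prescription (5.3) is unambiguous: on any interval where $\partial_{Z}x$ vanishes one has $w \equiv -\pi$, whence (4.25) yields $u_{Z} = \tfrac{1}{2}v\sin w \equiv 0$, and a parallel argument using the second equation of (4.1) shows that $\rho$ is also constant on such an interval. Lemma \ref{lem403} guarantees that for a.e. $t$ the set $\{Z:w(t,Z)=-\pi\}$ has Lebesgue measure zero, so on the complement of a null set of times the change of variables $x = x(t,\cdot)$ is a bona fide homeomorphism with Jacobian $dx = v\cos^{2}(w/2)\,dZ$.

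For property (i), H\"older regularity in $x$ at fixed $t$ follows from Cauchy--Schwarz via the Jacobian,
\begin{align*}
|u(t,x_{2})-u(t,x_{1})| &= \left|\int_{Z_{1}}^{Z_{2}}\tfrac{1}{2}v\sin w\,dZ\right| \\
&\leq \left(\int_{Z_{1}}^{Z_{2}} v\sin^{2}\tfrac{w}{2}\,dZ\right)^{1/2}\left(\int_{Z_{1}}^{Z_{2}}v\cos^{2}\tfrac{w}{2}\,dZ\right)^{1/2} \leq \bar{E}(0)^{1/2}|x_{2}-x_{1}|^{1/2},
\end{align*}
using the conservation law (4.29); the same scheme with the $L^{\infty}$ bound on $\rho$ from Lemma \ref{lem402} in place of one factor gives the spatial H\"older estimate for $\rho$. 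Temporal H\"older continuity then follows from $|x(T_{2},Z)-x(T_{1},Z)| \leq \|f'(u)\|_{L^{\infty}}|T_{2}-T_{1}|$ combined with the spatial bound and the uniform control on $u_{T},\rho_{T}$ recorded in Lemma \ref{lem402}. For the weak PDE (5.4), the chain rule along characteristics gives $u_{T}=u_{t}+f'(u)u_{x}$, so the first equation of (4.1) translates into $u_{t}+f'(u)u_{x}=-P_{x}$ as an identity in $L^{2}(\mathbb{R})$; the equation for $\rho$ is obtained from the second equation of (4.1) by the same device.

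The energy identity (5.5) follows by changing variables,
\begin{align*}
\int_{\mathbb{R}}(u^{2}+u_{x}^{2}+\rho^{2})(t,x)\,dx = \int_{\mathbb{R}}\left(u^{2}\cos^{2}\tfrac{w}{2}+\sin^{2}\tfrac{w}{2}+\rho^{2}\cos^{2}\tfrac{w}{2}\right)v\,dZ = E(T),
\end{align*}
and invoking (4.29). For continuous dependence (iii), the solution map of the ODE system (4.1) is Lipschitz on bounded sets of $X$ (Lemma \ref{lem401}): convergence of initial data in $H^{1}\times(L^{2}\cap L^{\infty})$ transfers into convergence in $X$ of the transformed initial data, hence into uniform-in-$T$ convergence of the semi-linear solutions, and the uniform H\"older bounds from (i) upgrade this to local uniform convergence of $(u_{n},\rho_{n})$ in $(t,x)$ by Arzel\`a--Ascoli. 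The main obstacle is the first step: the locus $\{w=-\pi\}$ is exactly where wave breaking manifests in the original variables ($u_{x}\to -\infty$), and the degeneracy of the Jacobian there could in principle spoil both the well-definedness of $(u,\rho)$ and the $L^{2}$-meaning of (5.4); Lemma \ref{lem403} is the decisive ingredient that confines this degeneracy to a negligible set of times, after which the remaining verifications are careful but routine transcriptions between the $(t,x)$ and $(T,Z)$ coordinate systems.
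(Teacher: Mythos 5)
Your proposal is correct and follows essentially the same route as the paper's proof: invert the transformation $x(T,Z)$ from the global solution of the semi-linear system, use $x_Z=v\cos^2\frac{w}{2}$ and $u_Z=\frac12 v\sin w$ to show $u$ (and $\rho$) are well defined despite the degenerate Jacobian, get H\"older continuity by the Cauchy--Schwarz/Sobolev argument with the conserved energy \eqref{4029}, obtain \eqref{505} by the change of variables together with Lemma \ref{lem403}, and deduce continuous dependence from the Lipschitz ODE flow as in Bressan--Constantin. The only differences are cosmetic: you cite \eqref{309} for $x_Z=v\cos^2\frac{w}{2}$ where the paper re-derives it by the ODE argument of Step 2, and your "parallel argument" for the well-definedness of $\rho$ on intervals where $w=-\pi$ is stated loosely (there is no $Z$-derivative formula for $\rho$ analogous to \eqref{4025}), though the paper itself does not treat this point in more detail.
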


\begin{proof}
The argument of proof is divided into five steps.

\textbf{Step 1.} We show that the continuous map $(T,Z)\rightarrow (t,x(T,Z))$
is a surjective function in $\mathbb{R}^{2}$. Indeed, by \eqref{403} and \eqref{501}, we get
\begin{equation*}
\bar{x}(Z)-\sqrt{E(0)}T\leq x(T,Z)\leq \bar{x}(Z)+\sqrt{E(0)}T.
\end{equation*}
Then from \eqref{303}, we deduce that
\begin{equation*}
\lim_{Z\rightarrow \pm\infty}x(T,Z)=\pm\infty.
\end{equation*}
Therefore, the image of continuous map $(T,Z)\rightarrow (t,x(T,Z))$ covers the entire half-plane $\mathbb{R}^{+}\times \mathbb{R}$.

\textbf{Step 2.} We claim that
\begin{equation}
x_{Z}=v\cos^{2} \frac{w}{2}\ \ \text{for}\ \ T\geq 0,\ \ \text{a.e.}\ \ Z\in \mathbb{R}.
\label{505}
\end{equation}
In fact, from \eqref{401} and \eqref{4025}, we get
\begin{align}
\left(v\cos^{2} \frac{w}{2}\right)_{T}
=&-w_{T}\sin \frac{w}{2}\cos \frac{w}{2}+v_{T}\cos^{2} \frac{w}{2}\nonumber\\
=&-v\sin \frac{w}{2}\cos \frac{w}{2}\left(2(g(u)-P(Z))\cos^{2} \frac{w}{2}
-f''(u)\sin^{2} \frac{w}{2}\right)\nonumber\\
&+v\sin w\cos^{2} \frac{w}{2}\left(g(u)-P(Z)+\frac{f''(u)}{2}\right)\nonumber\\
=&\frac{f''(u)}{2}v\sin w\nonumber\\
=&\left(f'(u)\right)_{Z}.
\label{506}
\end{align}

On the other hand, \eqref{502} implies
\begin{equation}
\frac{\partial x_{Z}}{\partial T}=\left(f'(u)\right)_{Z}.
\label{507}
\end{equation}
Since the function $x\rightarrow 2\arctan \bar{u}_{x}(x)$ is measure, the identity \eqref{505} holds for almost every $Z\in \mathbb{R}$ and $T=0$. By the above computations, it remains true for all times $T\geq 0$. Furthermore, the function $x(T,Z)$ is non-decreasing on $Z$ when $T$ is fixed.

\textbf{Step 3.} Our goal is to show that $u(t,x)=u(T,x(T,Z))$ is well defined. In fact,
if $x(T,Z_{1})=x(T,Z_{2})$ for $Z_{1}<Z_{2}$, then we have
\begin{equation*}
x(T,Z)=x(T,Z_{1})\ \text{for}\ Z\in[Z_{1},Z_{2}],
\end{equation*}
where we use the non-decreasing property of $x(T,Z)$ on $Z$. From \eqref{505}, we get
\begin{equation*}
\cos \frac{w(T,Z)}{2}=0\ \text{for}\ Z\in[Z_{1},Z_{2}].
\end{equation*}
Therefore,
\begin{equation*}
u(T,Z_{2})-u(T,Z_{1})=\int_{Z_{1}}^{Z_{2}}\frac{v}{2}\sin wds=0.
\end{equation*}
This proves $u(t,x)\rightarrow u(T,x(T,Z))$ is well defined for all $t\geq 0$ and $x\in \mathbb{R}$.

\textbf{Step 4.} We discuss the regularity of $(u(t,\cdot),\rho(t,\cdot))$ and energy conservation, and prove that the map $t\mapsto (u(t,\cdot),\rho(t,\cdot))$ are Lipschitz continuous on $L^{2}(\mathbb{R})$. By \eqref{4012}, we know that $E(T)$ is conservative on $(T,Z)$ coordinates.
For any given time $t$, we have
\begin{align}
E(T)
=&\int_{\mathbb{R}}\left(u^{2}\cos^{2}\frac{w}{2}+\sin^{2}\frac{w}{2}+\rho^{2}\cos^{2}\frac{w}{2}\right)vdZ\nonumber\\
=&\int_{\{\cos w>-1\}}\left(u^{2}\cos^{2}\frac{w}{2}+\sin^{2}\frac{w}{2}+\rho^{2}\cos^{2}\frac{w}{2}\right)vdZ\nonumber\\
=&E(0).
\label{508}
\end{align}
By Lemma \ref{lem403}, we obtain that \eqref{504} holds for almost all $t$.

Applying a Sobolev inequality in \cite{Bressan2007}, we obtain the uniform H\"older continuity with the exponent $\frac{1}{2}$ for $(u(t,x),\rho(t,x))$ as functions of $x$. Besides, from the first equation in \eqref{401} and the uniform bounds on $\|P_{x}\|_{L^{\infty}}$, we can deduce the map $t\mapsto u(t,\cdot)$ is Lipschitz continuous on $L^{\mathbb{R}}$. Similar calculation shows the Lipschitz continuity of $\rho(t,\cdot)$ as a map $t\mapsto \rho(t,\cdot).$

\textbf{Step 5.} The proof of global energy conservative solution $(u(t,x),\rho(t,x))$ in the sense of Definition \ref{def501}, and the continuous dependence result can be directly obtained by following the argument in \cite{Bressan2007}.

\end{proof}

{\bf Acknowledgement.}
This work is partially supported by NSFC Grant (no. 12201539) and Natural Science Foundation of Xinjiang Uygur Autonomous Region (no. 2022D01C65) and Natural Science Foundation of Gansu Province (no. 23JRRG0006) and The Youth Doctoral Support Project for Universities in Gansu Province (no. 2024QB-106).
\bibliographystyle{elsarticle-num}
\bibliography{<your-bib-database>}


\section*{References}

\end{document}